\newtheorem{theorem}{Theorem}
\newtheorem{axiom}[theorem]{Axiom}
\newtheorem{conjecture}[theorem]{Conjecture}
\newtheorem{corollary}[theorem]{Corollary}
\newtheorem{definition}[theorem]{Definition}
\newtheorem{example}[theorem]{Example}
\newtheorem{exercise}[theorem]{Exercise}
\newtheorem{lemma}[theorem]{Lemma}
\newtheorem{proposition}[theorem]{Proposition}
\newtheorem{remark}[theorem]{Remark}
\newenvironment{proof}[1][Proof]{\noindent\textbf{#1.} }{\ \rule{0.5em}{0.5em}}
\chardef\@x10\chardef\@xv60
\def\tcitime{
\def\@time{%
  \@minute\time\@hour\@minute\divide\@hour\@xv
  \ifnum\@hour<\@x 0\fi\the\@hour:%
  \multiply\@hour\@xv\advance\@minute-\@hour
  \ifnum\@minute<\@x 0\fi\the\@minute
  }}%
\def\x@hyperref#1#2#3{%
   \catcode`\~ = 12
   \catcode`\$ = 12
   \catcode`\_ = 12
   \catcode`\# = 12
   \catcode`\& = 12
   \y@hyperref{#1}{#2}{#3}%
}
\def\y@hyperref#1#2#3#4{%
   #2\ref{#4}#3
   \catcode`\~ = 13
   \catcode`\$ = 3
   \catcode`\_ = 8
   \catcode`\# = 6
   \catcode`\& = 4
}
\def\QCTOpt[#1]#2{%
  \def\QCTOptB{#1}
  \def\QCTOptA{#2}
}
\def\QCTNOpt#1{%
  \def\QCTOptA{#1}
  \let\QCTOptB\empty
}
\def\Qct{%
  \@ifnextchar[{%
    \QCTOpt}{\QCTNOpt}
}
\def\QCBOpt[#1]#2{%
  \def\QCBOptB{#1}%
  \def\QCBOptA{#2}%
}
\def\QCBNOpt#1{%
  \def\QCBOptA{#1}%
  \let\QCBOptB\empty
}
\def\Qcb{%
  \@ifnextchar[{%
    \QCBOpt}{\QCBNOpt}%
}
\def\PrepCapArgs{%
  \ifx\QCBOptA\empty
    \ifx\QCTOptA\empty
      {}%
    \else
      \ifx\QCTOptB\empty
        {\QCTOptA}%
      \else
        [\QCTOptB]{\QCTOptA}%
      \fi
    \fi
  \else
    \ifx\QCBOptA\empty
      {}%
    \else
      \ifx\QCBOptB\empty
        {\QCBOptA}%
      \else
        [\QCBOptB]{\QCBOptA}%
      \fi
    \fi
  \fi
}
\def\GRAPHICSPS#1{%
 \ifcase\GRAPHICSTYPE
   \special{ps: #1}%
 \or
   \special{language "PS", include "#1"}%
 \fi
}%
\def\graffile#1#2#3#4{%
    \bgroup
	   \@inlabelfalse
       \leavevmode
       \@ifundefined{bbl@deactivate}{\def~{\string~}}{\activesoff}%
        \raise -#4 \BOXTHEFRAME{%
           \hbox to #2{\raise #3\hbox to #2{\null #1\hfil}}}%
    \egroup
}%
\def\draftbox#1#2#3#4{%
 \leavevmode\raise -#4 \hbox{%
  \frame{\rlap{\protect\tiny #1}\hbox to #2%
   {\vrule height#3 width\z@ depth\z@\hfil}%
  }%
 }%
}%
\let\nographics=\@msidraft
\newif\ifwasdraft
\def\GRAPHIC#1#2#3#4#5{%
   \ifnum\@msidraft=\@ne\draftbox{#2}{#3}{#4}{#5}%
   \else\graffile{#1}{#3}{#4}{#5}%
   \fi
}
\def\addtoLaTeXparams#1{%
    \edef\LaTeXparams{\LaTeXparams #1}}%
\newif\ifBoxFrame \BoxFramefalse
\newif\ifOverFrame \OverFramefalse
\newif\ifUnderFrame \UnderFramefalse
\def\BOXTHEFRAME#1{%
   \hbox{%
      \ifBoxFrame
         \frame{#1}%
      \else
         {#1}%
      \fi
   }%
}
\def\doFRAMEparams#1{\BoxFramefalse\OverFramefalse\UnderFramefalse\readFRAMEparams#1\end}%
\def\readFRAMEparams#1{%
 \ifx#1\end%
  \let\next=\relax
  \else
  \ifx#1i\dispkind=\z@\fi
  \ifx#1d\dispkind=\@ne\fi
  \ifx#1f\dispkind=\tw@\fi
  \ifx#1t\addtoLaTeXparams{t}\fi
  \ifx#1b\addtoLaTeXparams{b}\fi
  \ifx#1p\addtoLaTeXparams{p}\fi
  \ifx#1h\addtoLaTeXparams{h}\fi
  \ifx#1X\BoxFrametrue\fi
  \ifx#1O\OverFrametrue\fi
  \ifx#1U\UnderFrametrue\fi
  \ifx#1w
    \ifnum\@msidraft=1\wasdrafttrue\else\wasdraftfalse\fi
    \@msidraft=\@ne
  \fi
  \let\next=\readFRAMEparams
  \fi
 \next
 }%
\def\IFRAME#1#2#3#4#5#6{%
      \bgroup
      \let\QCTOptA\empty
      \let\QCTOptB\empty
      \let\QCBOptA\empty
      \let\QCBOptB\empty
      #6%
      \parindent=0pt
      \leftskip=0pt
      \rightskip=0pt
      \setbox0=\hbox{\QCBOptA}%
      \@tempdima=#1\relax
      \ifOverFrame
          \typeout{This is not implemented yet}%
          \show\HELP
      \else
         \ifdim\wd0>\@tempdima
            \advance\@tempdima by \@tempdima
            \ifdim\wd0 >\@tempdima
               \setbox1 =\vbox{%
                  \unskip\hbox to \@tempdima{\hfill\GRAPHIC{#5}{#4}{#1}{#2}{#3}\hfill}%
                  \unskip\hbox to \@tempdima{\parbox[b]{\@tempdima}{\QCBOptA}}%
               }%
               \wd1=\@tempdima
            \else
               \textwidth=\wd0
               \setbox1 =\vbox{%
                 \noindent\hbox to \wd0{\hfill\GRAPHIC{#5}{#4}{#1}{#2}{#3}\hfill}\\%
                 \noindent\hbox{\QCBOptA}%
               }%
               \wd1=\wd0
            \fi
         \else
            \ifdim\wd0>0pt
              \hsize=\@tempdima
              \setbox1=\vbox{%
                \unskip\GRAPHIC{#5}{#4}{#1}{#2}{0pt}%
                \break
                \unskip\hbox to \@tempdima{\hfill \QCBOptA\hfill}%
              }%
              \wd1=\@tempdima
           \else
              \hsize=\@tempdima
              \setbox1=\vbox{%
                \unskip\GRAPHIC{#5}{#4}{#1}{#2}{0pt}%
              }%
              \wd1=\@tempdima
           \fi
         \fi
         \@tempdimb=\ht1
         \advance\@tempdimb by -#2
         \advance\@tempdimb by #3
         \leavevmode
         \raise -\@tempdimb \hbox{\box1}%
      \fi
      \egroup%
}%
\def\DFRAME#1#2#3#4#5{%
  \hfil\break
  \bgroup
     \leftskip\@flushglue
	 \rightskip\@flushglue
	 \parindent\z@
	 \parfillskip\z@skip
     \let\QCTOptA\empty
     \let\QCTOptB\empty
     \let\QCBOptA\empty
     \let\QCBOptB\empty
	 \vbox\bgroup
        \ifOverFrame 
           #5\QCTOptA\par
        \fi
        \GRAPHIC{#4}{#3}{#1}{#2}{\z@}%
        \ifUnderFrame 
           \break#5\QCBOptA
        \fi
	 \egroup
   \egroup
   \break
}%
\def\FFRAME#1#2#3#4#5#6#7{%
  \@ifundefined{floatstyle}
    {
     \begin{figure}[#1]%
    }
    {
	 \ifx#1h
      \begin{figure}[H]%
	 \else
      \begin{figure}[#1]%
	 \fi
	}
  \let\QCTOptA\empty
  \let\QCTOptB\empty
  \let\QCBOptA\empty
  \let\QCBOptB\empty
  \ifOverFrame
    #4
    \ifx\QCTOptA\empty
    \else
      \ifx\QCTOptB\empty
        \caption{\QCTOptA}%
      \else
        \caption[\QCTOptB]{\QCTOptA}%
      \fi
    \fi
    \ifUnderFrame\else
      \label{#5}%
    \fi
  \else
    \UnderFrametrue%
  \fi
  \begin{center}\GRAPHIC{#7}{#6}{#2}{#3}{\z@}\end{center}%
  \ifUnderFrame
    #4
    \ifx\QCBOptA\empty
      \caption{}%
    \else
      \ifx\QCBOptB\empty
        \caption{\QCBOptA}%
      \else
        \caption[\QCBOptB]{\QCBOptA}%
      \fi
    \fi
    \label{#5}%
  \fi
  \end{figure}%
 }%
\def\makeactives{
  \catcode`\"=\active
  \catcode`\;=\active
  \catcode`\:=\active
  \catcode`\'=\active
  \catcode`\~=\active
}
   \gdef\activesoff{%
      \def"{\string"}
      \def;{\string;}
      \def:{\string:}
      \def'{\string'}
      \def~{\string~}
    }
\def\FRAME#1#2#3#4#5#6#7#8{%
 \bgroup
 \ifnum\@msidraft=\@ne
   \wasdrafttrue
 \else
   \wasdraftfalse%
 \fi
 \def\LaTeXparams{}%
 \dispkind=\z@
 \def\LaTeXparams{}%
 \doFRAMEparams{#1}%
 \ifnum\dispkind=\z@\IFRAME{#2}{#3}{#4}{#7}{#8}{#5}\else
  \ifnum\dispkind=\@ne\DFRAME{#2}{#3}{#7}{#8}{#5}\else
   \ifnum\dispkind=\tw@
    \edef\@tempa{\noexpand\FFRAME{\LaTeXparams}}%
    \@tempa{#2}{#3}{#5}{#6}{#7}{#8}%
    \fi
   \fi
  \fi
  \ifwasdraft\@msidraft=1\else\@msidraft=0\fi{}%
  \egroup
 }%
\def\TEXUX#1{"texux"}
\long\def\QQQ#1#2{%
     \long\expandafter\def\csname#1\endcsname{#2}}%
\long\def\QQA#1#2{}%
\def\QTR#1#2{{\csname#1\endcsname #2}}
\def\EXPAND#1[#2]#3{}%
\def\NOEXPAND#1[#2]#3{}%
\def\LaTeXparent#1{}%
\def\ChildStyles#1{}%
\def\ChildDefaults#1{}%
\def\QTagDef#1#2#3{}%
  \providecommand{\UNICODE}[2][]{\protect\rule{.1in}{.1in}}
  \providecommand{\U}[1]{\protect\rule{.1in}{.1in}}
\def\QQfnmark#1{\footnotemark}
 \def\abstract{%
  \if@twocolumn
   \section*{Abstract (Not appropriate in this style!)}%
   \else \small 
   \begin{center}{\bf Abstract\vspace{-.5em}\vspace{\z@}}\end{center}%
   \quotation 
   \fi
  }%
   \def\registered{\relax\ifmmode{}\r@gistered
                    \else$\m@th\r@gistered$\fi}%
 \def\r@gistered{^{\ooalign
  {\hfil\raise.07ex\hbox{$\scriptstyle\rm\text{R}$}\hfil\crcr
  \mathhexbox20D}}}}{}%
\newdimen\theight
\def\newfmtname{LaTeX2e}
  \DeclareOldFontCommand{\rm}{\normalfont\rmfamily}{\mathrm}
  \DeclareOldFontCommand{\sf}{\normalfont\sffamily}{\mathsf}
  \DeclareOldFontCommand{\tt}{\normalfont\ttfamily}{\mathtt}
  \DeclareOldFontCommand{\bf}{\normalfont\bfseries}{\mathbf}
  \DeclareOldFontCommand{\it}{\normalfont\itshape}{\mathit}
  \DeclareOldFontCommand{\sl}{\normalfont\slshape}{\@nomath\sl}
  \DeclareOldFontCommand{\sc}{\normalfont\scshape}{\@nomath\sc}
\def\alpha{{\Greekmath 010B}}%
\def\beta{{\Greekmath 010C}}%
\def\gamma{{\Greekmath 010D}}%
\def\delta{{\Greekmath 010E}}%
\def\epsilon{{\Greekmath 010F}}%
\def\zeta{{\Greekmath 0110}}%
\def\eta{{\Greekmath 0111}}%
\def\theta{{\Greekmath 0112}}%
\def\iota{{\Greekmath 0113}}%
\def\kappa{{\Greekmath 0114}}%
\def\lambda{{\Greekmath 0115}}%
\def\mu{{\Greekmath 0116}}%
\def\nu{{\Greekmath 0117}}%
\def\xi{{\Greekmath 0118}}%
\def\pi{{\Greekmath 0119}}%
\def\rho{{\Greekmath 011A}}%
\def\sigma{{\Greekmath 011B}}%
\def\tau{{\Greekmath 011C}}%
\def\upsilon{{\Greekmath 011D}}%
\def\phi{{\Greekmath 011E}}%
\def\chi{{\Greekmath 011F}}%
\def\psi{{\Greekmath 0120}}%
\def\omega{{\Greekmath 0121}}%
\def\varepsilon{{\Greekmath 0122}}%
\def\vartheta{{\Greekmath 0123}}%
\def\varpi{{\Greekmath 0124}}%
\def\varrho{{\Greekmath 0125}}%
\def\varsigma{{\Greekmath 0126}}%
\def\varphi{{\Greekmath 0127}}%
\def\nabla{{\Greekmath 0272}}
\def\FindBoldGroup{%
   {\setbox0=\hbox{$\mathbf{x\global\edef\theboldgroup{\the\mathgroup}}$}}%
}
\def\Greekmath#1#2#3#4{%
    \if@compatibility
        \ifnum\mathgroup=\symbold
           \mathchoice{\mbox{\boldmath$\displaystyle\mathchar"#1#2#3#4$}}%
                      {\mbox{\boldmath$\textstyle\mathchar"#1#2#3#4$}}%
                      {\mbox{\boldmath$\scriptstyle\mathchar"#1#2#3#4$}}%
                      {\mbox{\boldmath$\scriptscriptstyle\mathchar"#1#2#3#4$}}%
        \else
           \mathchar"#1#2#3#4%
        \fi 
    \else 
        \FindBoldGroup
        \ifnum\mathgroup=\theboldgroup 
           \mathchoice{\mbox{\boldmath$\displaystyle\mathchar"#1#2#3#4$}}%
                      {\mbox{\boldmath$\textstyle\mathchar"#1#2#3#4$}}%
                      {\mbox{\boldmath$\scriptstyle\mathchar"#1#2#3#4$}}%
                      {\mbox{\boldmath$\scriptscriptstyle\mathchar"#1#2#3#4$}}%
        \else
           \mathchar"#1#2#3#4%
        \fi     	    
	  \fi}
\newif\ifGreekBold  \GreekBoldfalse
\let\SAVEPBF=\pbf
\def\pbf{\GreekBoldtrue\SAVEPBF}%
  \newcounter{equationnumber}  
  \def\mathletters{%
     \addtocounter{equation}{1}
     \edef\@currentlabel{\theequation}%
     \setcounter{equationnumber}{\c@equation}
     \setcounter{equation}{0}%
     \edef\theequation{\@currentlabel\noexpand\alph{equation}}%
  }
    \def\BibTeX{{\rm B\kern-.05em{\sc i\kern-.025em b}\kern-.08em
                 T\kern-.1667em\lower.7ex\hbox{E}\kern-.125emX}}}{}%
\def\AmS{{\protect\usefont{OMS}{cmsy}{m}{n}%
                A\kern-.1667em\lower.5ex\hbox{M}\kern-.125emS}}}{}%
\def\@@eqncr{\let\@tempa\relax
    \ifcase\@eqcnt \def\@tempa{& & &}\or \def\@tempa{& &}%
      \else \def\@tempa{&}\fi
     \@tempa
     \if@eqnsw
        \iftag@
           \@taggnum
        \else
           \@eqnnum\stepcounter{equation}%
        \fi
     \fi
     \global\tag@false
     \global\@eqnswtrue
     \global\@eqcnt\z@\cr}
\def\TCItag{\@ifnextchar*{\@TCItagstar}{\@TCItag}}
\def\@TCItag#1{%
    \global\tag@true
    \global\def\@taggnum{(#1)}}
\def\@TCItagstar*#1{%
    \global\tag@true
    \global\def\@taggnum{#1}}
\begin{document}

\title{The Wronski orthogonalization process in Hilbert function spaces}
\author{Athanasios C. Micheas \thanks{%
Department of Statistics, University of Missouri, 146 Middlebush Hall,
Columbia, MO 65211-6100, USA, email: micheasa@missouri.edu}}
\maketitle

\begin{abstract}
Based on the Wronski determinant, we propose the construction of linearly
independent and orthogonal functions in any Hilbert function space. We call
the new method the Wronski orthogonalization process. The method requires
only an initial function from the space of functions under consideration,
that satisfies mild conditions, and emerges as a generalization of the
Gram-Schmidt process. Two applications are considered, including solutions
to ordinary differential equations and the construction of basis functions
in Hilbert function spaces. We also present a conjecture that connects the
latter two concepts, which leads to the introduction of the Wronski basis.
\end{abstract}

\textbf{Keywords}: Differential equations; Gram-Schmidt orthonormalization
process; Hilbert function space; Linearly independent functions; Orthogonal
functions; Wronski determinant

\textbf{MSC Classification:} Primary: 46E20, Secondary: 33C10, 34A30

\section{Introduction}

Linearly independent and orthogonal sets of functions constitute a
foundation of modern mathematics and its applications, most notably in
functional and numerical analysis. Such sets are used to create new
methodologies with important applications in many different scientific
disciplines, including chemistry, engineering, physics and statistics.

While classic methods, such as the Gram-Schmidt orthonormalization, allow us
to build orthonormal bases in function spaces, they always require a
starting collection of linearly independent functions from the function
space (see \cite{Dudley2004}, Theorem 5.4.6, \cite{ovchinnikov2018functional}%
, Theorem 7.14). Moreover, finding the totality of solutions to
inhomogeneous, linear, ordinary differential equations (ODE) requires a set
of fundamental solutions (linearly independent solutions) of the homogeneous
ODE (see \cite{barbu2016differential}, \cite{zwillinger2021handbook} and
Remark \ref{variationofparameters} below). In this paper, we propose methods
that address these important problems in mathematics.

In what follows, we consider a general function space%
\begin{equation}
\mathcal{F}_{\mathbb{M}}^{\mathbb{M}}=\{f:\mathbb{M}\rightarrow \mathbb{M},%
\text{ }f\text{ measurable in }(\mathbb{M},\mathcal{B}(\mathbb{M}))\},
\label{FunctionSpace1}
\end{equation}%
with $\mathcal{B}(\mathbb{M})$ the Borel sets of $\mathbb{M},$ where we use $%
\mathbb{M}$ to denote the real numbers $\Re $ or the complex plane $\mathbb{C%
}$. Recall that a set of functions $\mathcal{F}_{N}=\{f_{1},...,f_{N}\}%
\subset \mathcal{F}_{\mathbb{M}}^{\mathbb{M}},$ forms a set of linearly
independent functions if the only set of constants $\mathbf{c}%
=[c_{1},...,c_{N}]\in \mathbb{M}^{N},$ which allows us to write%
\begin{equation}
\sum_{i=1}^{N}c_{i}f_{i}(x)=0,  \label{LinIndep}
\end{equation}%
is the zero vector, i.e., $c_{i}=0$, for all $i=1,2..,N$. In other words, no
function in the set $\mathcal{F}_{N}$ can be written as a linear combination
of other functions in $\mathcal{F}_{N}$.

The classic method of verifying Equation (\ref{LinIndep}) is via the Wronski
determinant (see \cite{zwillinger2021handbook}, pg 13, \cite%
{bostan2010wronskians}, \cite{purbhoo2013wronskians}), denoted by $%
W(f_{1},...,f_{N}),$ which is defined as the determinant%
\begin{equation}
W(f_{1},...,f_{N})=\left\vert
\begin{tabular}{lll}
$f_{1}(x)$ & $...$ & $f_{N}(x)$ \\
$\frac{df_{1}(x)}{dx}$ & $...$ & $\frac{df_{N}(x)}{dx}$ \\
$...$ &  & $...$ \\
$\frac{d^{N-1}f_{1}(x)}{dx^{N-1}}$ & $...$ & $\frac{d^{N-1}f_{N}(x)}{dx^{N-1}%
}$%
\end{tabular}%
\right\vert ,  \label{Wronskidet}
\end{equation}%
for any $x\in \mathbb{M},$ provided that all derivatives up to the order $%
N-1 $ of $f\in \mathcal{F}_{N},$ exist. Then $\mathcal{F}_{N}$ is linearly
independent if $W(f_{1},...,f_{N})\neq 0,$ for all $x\in \mathbb{M}$, and
therefore, the functions in the collection $\mathcal{F}_{N}$ must be up to $%
(N-1)$-order differentiable in order to perform the Wronski criterion. As a
result, we write $\mathcal{F}_{\mathbb{M}}^{N,\mathbb{M}}\subset \mathcal{F}%
_{\mathbb{M}}^{\mathbb{M}}$, for the collection of functions from $\mathcal{F%
}_{\mathbb{M}}^{\mathbb{M}}$ that are continuously, differentiable up to the
$N$th order. For countable sets of functions $\mathcal{F}_{\infty
}=\{f_{1},f_{2},...\}\subset \mathcal{F}_{\mathbb{M}}^{\mathbb{M}}$,
derivatives of all order are required, which can be satisfied by assuming
that any $f\in \mathcal{F}_{\infty },$ is an analytic function. Note that $%
\mathcal{F}_{\infty }$ is linearly independent if and only if any finite
subset is linearly independent.

In order to study the properties of the general function space $\mathcal{F}_{%
\mathbb{M}}^{\mathbb{M}}$ we require concepts such as the length of a
function, a basis that helps describe each element of $\mathcal{F}_{\mathbb{M%
}}^{\mathbb{M}}$, and limits of elements of $\mathcal{F}_{\mathbb{M}}^{%
\mathbb{M}}$, and so forth. A Hilbert function space (see \cite%
{isaev2021necessary}) provides all the important ingredients we need in
order to perform all of the aforementioned tasks, and more. Therefore, we
will assume that the function space $\mathcal{F}_{\mathbb{M}}^{\mathbb{M}}$
under consideration is Hilbert based on the inner product $\rho .$ Then, we
can immediately equip $\mathcal{F}_{\mathbb{M}}^{\mathbb{M}}$ with the
induced norm%
\begin{equation}
\left\Vert f\right\Vert _{\rho }=\sqrt{\rho (f,f)},  \label{MTMNorm}
\end{equation}%
and distance%
\begin{equation}
d\left( f_{1},f_{2}\right) =\left\Vert f_{1}-f_{2}\right\Vert _{\rho }=\sqrt{%
\rho \left( f_{1}-f_{2},f_{1}-f_{2}\right) },  \label{MTMDistance}
\end{equation}%
such that $\mathcal{F}_{\mathbb{M}}^{\mathbb{M}}$ becomes a normed vector
(linear) space and $\left( \mathcal{F}_{\mathbb{M}}^{\mathbb{M}},d\right) $
is a metric space.

In this paper, we propose an orthogonalization construction for Hilbert
function spaces, in order to:

\begin{itemize}
\item simplify, streamline and generalize the Gram-Schmidt
orthonormalization process. In particular, we no longer require a collection
of $N$ linearly independent functions beforehand, as in the Gram-Schmidt
process, in order to construct an orthonormal set of functions or function
basis;

\item provide a subset of linearly independent and orthogonal functions by
construction in any Hilbert function space. Importantly, all that is
required is the knowledge of a single function from the space, that
satisfies mild conditions;

\item create a flexible framework that is highly amenable to changes and
applicable to different contexts in mathematics;

\item give us a new method of creating bases in Hilbert function spaces;

\item provide a set of fundamental solutions for linear, homogeneous, $n$th
order ODEs, and as a result, the totality of solutions for any linear,
inhomogeneous ODE.
\end{itemize}

The new method will be aptly called the Wronski orthogonalization process.
In this introductory paper, the Wronski process will help us answer two
important questions; firstly, can we build a basis in any Hilbert function
space, using only a single function from the space? Second, can we obtain
all solutions of an ODE given a single fundamental solution of the
corresponding homogeneous ODE?

The paper proceeds as follows. In the next section we discuss the underlying
details of the proposed Wronski orthogonalization procedure and show that it
is a generalization of the Gram-Schmidt process. In Section 3 we present an
application of the method to solving linear, inhomogeneous ODEs and
introduce the Wronski basis conjecture. Concluding remarks are given in the
last section.

\section{Wronski Orthogonalization}

Consider $\mathcal{F}_{\mathbb{M}}^{N,\mathbb{M}}$ and let $\rho $ be an
inner product, such that $(\mathcal{F}_{\mathbb{M}}^{N,\mathbb{M}},\rho )$
is a Hilbert space. Let $\mathcal{F}_{N}=\{f_{1},...,f_{N}\}$ $\subset
\mathcal{F}_{\mathbb{M}}^{N,\mathbb{M}}$, and assume that $f_{1}(x)\neq 0,$ $%
\forall x\in \mathbb{M}$, with $0<\left\Vert f_{1}\right\Vert _{\rho
}<+\infty .$ Although any measure can be used, we will use the Lebesgue
measure $\mu $ over $(\mathbb{M},\mathcal{B}(\mathbb{M})),$ as the
integrating measure, in the formulation of any integrals that follow.

The construction we propose can be thought of as the inverse process of the
variation of parameters method for solving linear, inhomogeneous, $n$th
order differential equations. In order to appreciate this idea, the
following remark collects the approach to solving differential equations
based on Wronskians (more details of this classic method can be found in
\cite{zwillinger2021handbook}).

\begin{remark}[Variation of parameters]
\label{variationofparameters}Consider the $n$th order, inhomogeneous, linear
differential equation%
\begin{gather}
a_{n}(x)\frac{d^{n}y(x)}{dx^{n}}+a_{n-1}(x)\frac{d^{n-1}y(x)}{dx^{n-1}}+...
\label{Inhomonorder} \\
+a_{1}(x)\frac{dy(x)}{dx}+a_{0}(x)y(x)=b(x),  \notag
\end{gather}%
where $b,a_{0},...,a_{n}$ are continuous functions over\ a subset $I\subset
\mathbb{M}$ (interval if $\mathbb{M}=\Re $ or rectangle if $\mathbb{M}=%
\mathbb{C}$), with $a_{n}(x)\neq 0,$ for all $x\in I$. Let the homogeneous
version of (\ref{Inhomonorder}),%
\begin{gather}
a_{n}(x)\frac{d^{n}y(x)}{dx^{n}}+a_{n-1}(x)\frac{d^{n-1}y(x)}{dx^{n-1}}+...
\label{homonorder} \\
+a_{1}(x)\frac{dy(x)}{dx}+a_{0}(x)y(x)=0,  \notag
\end{gather}%
has a set of linearly independent solutions $\{y_{1},...,y_{n}\}.$\newline
Now, the variation of parameters method provides the general solution of (%
\ref{Inhomonorder}), given by%
\begin{equation*}
y(x)=y_{0}(x)+\sum\limits_{i=1}^{n}c_{i}y_{i}(x),
\end{equation*}%
$x\in I$, where $y_{0}(x)$ is a specific solution of (\ref{Inhomonorder}),
and $c_{1},c_{2},...,c_{n}$ are real constants. The specific solution can be
written in terms of Wronskians as%
\begin{equation*}
y_{0}(x)=\sum\limits_{k=1}^{n}y_{k}(x)\int\limits_{x_{0}}^{x}\frac{%
W_{k}(y_{1},...,y_{n})(t)}{W(y_{1},...,y_{n})(t)}\frac{b(t)}{a_{n}(t)}d\mu
(t),
\end{equation*}%
for any $x_{0}\in I$, where $W_{k}(y_{1},...,y_{n})$ denotes the Wronskian $%
W(y_{1},...,y_{n}),$ with its $k$-column replaced by $(0,0,...,0,1)$. By
construction, $y_{0}(x)$ satisfies the initial conditions $\frac{%
d^{n-1}y_{0}(x_{0})}{dx^{n-1}}$ $=...=y_{0}(x_{0})=0.$\newline
In addition, recall that if $\{y_{1},...,y_{n}\}$ are $n$-order
differentiable, with $W(y_{1},$ $...,$ $y_{n})$ $\neq 0,$ for all $x\in
\mathbb{M}$, then there exists a unique, homogeneous, linear differential
equation of the form (\ref{Inhomonorder}), with $a_{n}(x)=1$, given by%
\begin{equation}
\frac{W(y_{1},...,y_{n},y)}{W(y_{1},...,y_{n})}=0,
\label{HomogeneousWronski}
\end{equation}%
such that $\{y_{1},...,y_{n}\}$ forms a set of fundamental solutions.
\end{remark}

Consider any $x_{0}\in I\subset \mathbb{M}$. We use the notation
\begin{eqnarray}
F_{h,f_{1},f_{2},...,f_{n}}(x)
&=&\sum\limits_{k=1}^{n}f_{k}(x)\int\limits_{x_{0}}^{x}\frac{%
W_{k}(f_{1},f_{2},...,f_{n})(t)}{W(f_{1},f_{2},...,f_{n})(t)}  \notag \\
&&h(t)d\mu (t),  \label{Ffunc1}
\end{eqnarray}%
where $h$ is a known, measurable weight function, such that the latter
integral exists and satisfies $F_{h,f_{1},f_{2},...,f_{n}}(x)<+\infty ,$
with $F_{h,f_{1},f_{2},...,f_{n}}(x_{0})=$ $\sum%
\limits_{k=1}^{n}f_{k}(x_{0}).$ For $n=1$, under similar assumptions for $h$%
, we write%
\begin{equation}
F_{h,f_{1}}(x)=f_{1}(x)\int\limits_{x_{0}}^{x}\frac{h(t)}{f_{1}(t)}d\mu
(t)<+\infty ,  \label{Ffunc0}
\end{equation}%
with $F_{h,f_{1}}(x_{0})=f_{1}(x_{0}).$

Given the initial function $f_{1}$, we begin the Wronski orthogonalization
construction as follows; any function $f_{2}$ is linearly independent of the
initial $f_{1},$ if the Wronski determinant%
\begin{eqnarray*}
W(f_{1},f_{2})(x) &=&\left\vert
\begin{tabular}{ll}
$f_{1}(x)$ & $f_{2}(x)$ \\
$\frac{df_{1}(x)}{dx}$ & $\frac{df_{2}(x)}{dx}$%
\end{tabular}%
\right\vert \\
&=&f_{1}(x)\frac{df_{2}(x)}{dx}-\frac{df_{1}(x)}{dx}f_{2}(x),
\end{eqnarray*}%
is non-zero for all $x\in \mathbb{M}$. We will construct $f_{2}$ in order to
guarantee $W(f_{1},f_{2})(x)\neq 0,$ for all $x\in \mathbb{M}$.

A crucial point of the construction is the following step, where we have the
option to set the Wronskian $W(f_{1},f_{2})(x)$ to a non-zero function $h$.
Note here that any choice could be made for $h$, which would lead to a new
function $f_{2}$, by solving the differential equation $%
W(f_{1},f_{2})(x)=h(x)\neq 0$, $\forall x\in \mathbb{M}.$ However, we will
choose specifically%
\begin{equation}
W(f_{1},f_{2})(x)=h_{1}(x)f_{1}(x)\neq 0,  \label{Wronski2Det}
\end{equation}%
$\forall x\in \mathbb{M},$ with $h_{1}\neq 0$, which leads to the
inhomogeneous, linear, first order differential equation%
\begin{equation}
\frac{df_{2}(x)}{dx}+\left( -\frac{1}{f_{1}(x)}\frac{df_{1}(x)}{dx}\right)
f_{2}(x)=h_{1}(x),  \label{DiffEqWronski}
\end{equation}%
with general solution%
\begin{equation}
f_{2}(x)=f_{1}(x)\left[ c+\int\limits_{x_{0}}^{x}\frac{h_{1}(t)}{f_{1}(t)}%
d\mu (t)\right] ,  \label{LinIndepFunc2}
\end{equation}%
where $c\in \mathbb{M}$, an arbitrary constant. Consequently, by
construction, the set of functions $\mathcal{F}_{2}=\{f_{1},f_{2}\}$ is
linearly independent, and%
\begin{equation*}
f_{2}(x)=cf_{1}(x)+F_{h_{1},f_{1}}(x).
\end{equation*}

In order to make $\mathcal{F}_{2}$ an orthogonal set, we look for a specific
constant $c\in \mathbb{M}$ such that $f_{1}$ and $f_{2}$ are orthogonal with
respect to $\rho $, i.e., we must have $\rho \left( f_{1},f_{2}\right) =0.$
Using properties of the inner product $\rho $ we can write%
\begin{eqnarray*}
0 &=&\rho \left( f_{1},f_{2}\right) =\rho \left(
f_{1},cf_{1}+F_{h_{1},f_{1}}\right) \\
&=&\rho \left( f_{1},cf_{1})+\rho (f_{1},F_{h_{1},f_{1}}\right) \\
&=&c\rho (f_{1},f_{1})+\rho (f_{1},F_{h_{1},f_{1}}),
\end{eqnarray*}%
so that solving for $c$ above we have%
\begin{equation*}
c=-\frac{\rho (f_{1},F_{h_{1},f_{1}})}{\left\Vert f_{1}\right\Vert _{\rho
}^{2}}.
\end{equation*}%
Therefore, the linearly independent and orthogonal function to $f_{1},$ is
written as%
\begin{equation*}
f_{2}(x)=F_{h_{1},f_{1}}(x)-\frac{f_{1}(x)}{\left\Vert f_{1}\right\Vert
_{\rho }^{2}}\rho (f_{1},F_{h_{1},f_{1}}),
\end{equation*}%
which is, not surprisingly, a similar form of the orthonormal elements of
the Gram-Schmidt orthonormalization theorem (see \cite{Dudley2004}, Theorem
5.4.6). Note in particular, that we could start the process with a function
of unit length $f_{1}^{0}=\frac{f_{1}}{\left\Vert f_{1}\right\Vert _{\rho }}%
, $ and then set $f_{2}^{0}=\frac{f_{2}}{\left\Vert f_{2}\right\Vert _{\rho }%
}, $ such that $\mathcal{F}_{2}$ becomes an orthonormal set, but we will
present the construction for the orthogonal case. Although it might not be
obvious at this stage of the construction, the Wronski orthogonalization can
be thought of as a generalization of the Gram-Schmidt process, where we do
not require linear independence beforehand; instead, linear independence is
embedded in the new function $f_{2}$ by construction. We exemplify this
relationship in Remark \ref{relationGS}\ below.

Proceed in a similar manner; assume that $\{f_{1},f_{2}\}$ are linearly
independent and orthogonal, and consider the Wronskian given by%
\begin{gather*}
W(f_{1},f_{2},f_{3})(x)=\left\vert
\begin{tabular}{lll}
$f_{1}(x)$ & $\frac{df_{1}(x)}{dx}$ & $\frac{d^{2}f_{1}(x)}{dx^{2}}$ \\
$f_{2}(x)$ & $\frac{df_{2}(x)}{dx}$ & $\frac{d^{2}f_{2}(x)}{dx^{2}}$ \\
$f_{3}(x)$ & $\frac{df_{3}(x)}{dx}$ & $\frac{d^{2}f_{3}(x)}{dx^{2}}$%
\end{tabular}%
\right\vert \\
=W_{3}(f_{1},f_{2},f_{3})\frac{d^{2}f_{3}(x)}{dx^{2}}%
-W_{2}(f_{1},f_{2},f_{3})\frac{df_{3}(x)}{dx} \\
+W_{1}(f_{1},f_{2},f_{3})f_{3}(x),
\end{gather*}%
and setting%
\begin{eqnarray*}
W(f_{1},f_{2},f_{3})(x) &=&h_{2}(x)W_{3}(f_{1},f_{2},f_{3}) \\
&=&h_{2}(x)W(f_{1},f_{2})(x)\neq 0,
\end{eqnarray*}%
for some function $h_{2}\neq 0,$ we have the inhomogeneous, linear, second
order differential equation%
\begin{gather}
\frac{d^{2}f_{3}(x)}{dx^{2}}-\frac{W_{2}(f_{1},f_{2},f_{3})}{%
W_{3}(f_{1},f_{2},f_{3})}\frac{df_{3}(x)}{dx}  \notag \\
+\frac{W_{1}(f_{1},f_{2},f_{3})}{W_{3}(f_{1},f_{2},f_{3})}f_{3}(x)=h_{2}(x).
\label{Thirdorder}
\end{gather}%
The homogeneous version of (\ref{Thirdorder}) is given by $\frac{%
W(f_{1},f_{2},y)}{W(f_{1},f_{2})}=0,$ with all its solutions given by $%
y=c_{1}f_{1}+c_{2}f_{2},$ for some constants $c_{1},c_{2}\in \mathbb{M}$.
Then the totality of solutions of (\ref{Thirdorder}) are given by
\begin{eqnarray*}
f_{3}(x) &=&c_{1}f_{1}(x)+c_{2}f_{2}(x) \\
&&+\sum\limits_{k=1}^{2}f_{k}(x)\int\limits_{x_{0}}^{x}\frac{%
W_{k}(f_{1},f_{2})(t)}{W(f_{1},f_{2})(t)}h_{2}(t)d\mu (t) \\
&=&c_{1}f_{1}(x)+c_{2}f_{2}(x)+F_{h_{2},f_{1},f_{2}}(x),
\end{eqnarray*}%
and as a result, we will look for constants $c_{1},c_{2}$ such that%
\begin{equation*}
\rho \left( f_{1},f_{3}\right) =0,\text{ and }\rho \left( f_{2},f_{3}\right)
=0.
\end{equation*}%
Since $\rho (f_{1},f_{2})=0,$ we can write%
\begin{eqnarray*}
0 &=&\rho \left( f_{1},f_{3}\right) =\rho \left(
f_{1},c_{1}f_{1}+c_{2}f_{2}+F_{h_{2},f_{1},f_{2}}\right) \\
&=&c_{1}\rho (f_{1},f_{1})+c_{2}\rho (f_{1},f_{2})+\rho \left(
f_{1},F_{h_{2},f_{1},f_{2}}\right) \\
&=&c_{1}\left\Vert f_{1}\right\Vert _{\rho }^{2}+\rho \left(
f_{1},F_{h_{2},f_{1},f_{2}}\right) ,
\end{eqnarray*}%
and therefore%
\begin{equation*}
c_{1}=-\frac{1}{\left\Vert f_{1}\right\Vert _{\rho }^{2}}\rho \left(
f_{1},F_{h_{2},f_{1},f_{2}}\right) .
\end{equation*}%
Similarly, we have%
\begin{equation*}
c_{2}=-\frac{1}{\left\Vert f_{2}\right\Vert _{\rho }^{2}}\rho \left(
f_{2},F_{h_{2},f_{1},f_{2}}\right) ,
\end{equation*}%
and as a result, the general solution of (\ref{Thirdorder}) is given by%
\begin{eqnarray*}
f_{3}(x) &=&-\frac{f_{1}(x)}{\left\Vert f_{1}\right\Vert _{\rho }^{2}}\rho
\left( f_{1},F_{h_{2},f_{1},f_{2}}\right) \\
&&-\frac{f_{2}(x)}{\left\Vert f_{2}\right\Vert _{\rho }^{2}}\rho \left(
f_{2},F_{h_{2},f_{1},f_{2}}\right) +F_{h_{2},f_{1},f_{2}}(x),
\end{eqnarray*}%
which makes the set $\{f_{1},f_{2},f_{3}\}$ linearly independent and
orthogonal. Continuing this process we can easily obtain $N$ linearly
independent and orthogonal functions $\mathcal{F}_{N}=\{f_{1},...,f_{N}\},$
all of which are created based on linear combinations of $f_{1}\in \mathcal{F%
}_{\mathbb{M}}^{N,\mathbb{M}},$ and therefore, $\mathcal{F}_{N}\subset
\mathcal{F}_{\mathbb{M}}^{N,\mathbb{M}},$ by construction.

Owing to the exposition above, we are now ready to present the main result
of the general form of the Wronski orthogonalization process.

\begin{theorem}[Wronski Orthogonalization Process]
\label{thmwronski}Assume that $\rho $ is an inner product and $(\mathcal{F}_{%
\mathbb{M}}^{N,\mathbb{M}},\rho ),$ $N<+\infty ,$ is a Hilbert function
space. Furthermore, assume that there exists a non-zero function $f_{1}\in
\mathcal{F}_{\mathbb{M}}^{N,\mathbb{M}},$ with $0<\left\Vert
f_{1}\right\Vert _{\rho }<+\infty .$ We can then create a set of linearly
independent and mutually orthogonal functions with respect to $\rho ,$ given
by $\mathcal{F}_{N}=\{f_{1},...,f_{N}\}$ $\subset \mathcal{F}_{\mathbb{M}%
}^{N,\mathbb{M}}$, where%
\begin{eqnarray}
f_{k}(x) &=&F_{h_{k-1},f_{1},f_{2},...,f_{k-1}}(x)  \label{sol1} \\
&&-\sum\limits_{i=1}^{k-1}\frac{f_{i}(x)}{\left\Vert f_{i}\right\Vert _{\rho
}^{2}}\rho \left( f_{i},F_{h_{k-1},f_{1},f_{2},...,f_{k-1}}\right) ,  \notag
\end{eqnarray}%
for $k=2,...,N,$ with $F_{h_{n},f_{1},f_{2},...,f_{n}}$ given by Equation (%
\ref{Ffunc1}), where the weight functions $h_{n}$ are chosen arbitrarily,
albeit, they are required to have no zeroes and be such that $%
F_{h_{n},f_{1},f_{2},...,f_{n}}(x)<+\infty ,$ for all $n=1,2,...,N-1.$%
\newline
In addition, $f_{k}(x)$ is obtained, by construction, as a solution to the
inhomogeneous, linear, $(k-1)$th order differential equation%
\begin{gather}
\sum\limits_{i=0}^{k-1}(-1)^{i}W_{i+1}(f_{1},f_{2},...,f_{k})\frac{%
d^{i}f_{k}(x)}{dx^{i}}=  \label{difeq1} \\
h_{k-1}(x)W_{k}(f_{1},f_{2},...,f_{k}),  \notag
\end{gather}%
for all $k=2,...,N.$
\end{theorem}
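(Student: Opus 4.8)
The plan is to argue by induction on $k$, since the cases $k=2$ and $k=3$ carried out in the exposition above already contain every mechanism needed for the general step. At stage $k$ the inductive hypothesis is that $f_1,\ldots,f_{k-1}$ have been produced, that they are mutually orthogonal with respect to $\rho$, that each satisfies $0<\Vert f_i\Vert_\rho<+\infty$, and that $W(f_1,\ldots,f_{k-1})(x)\neq 0$ for all $x\in\mathbb{M}$ (equivalently, that the set is linearly independent). The base case $k=1$ holds by the hypothesis placed on $f_1$.

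For the inductive step I would first read off the differential equation. Expanding the determinant $W(f_1,\ldots,f_{k-1},f_k)$ by cofactors along the $f_k$ row produces $\sum_{i=0}^{k-1}(-1)^i W_{i+1}(f_1,\ldots,f_k)\,\frac{d^i f_k}{dx^i}$, which is exactly the left-hand side of (\ref{difeq1}); the identity $W_k(f_1,\ldots,f_k)=W(f_1,\ldots,f_{k-1})$ follows by expanding the determinant obtained after replacing the $k$th column by $(0,\ldots,0,1)$ along that column. Imposing $W(f_1,\ldots,f_{k-1},f_k)=h_{k-1}W_k(f_1,\ldots,f_k)$ then yields precisely (\ref{difeq1}). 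Dividing through by the leading coefficient $W_k=W(f_1,\ldots,f_{k-1})$, which is nonzero everywhere by the inductive hypothesis, gives a regular inhomogeneous linear equation of order $k-1$ whose associated homogeneous equation is $W(f_1,\ldots,f_{k-1},y)/W(f_1,\ldots,f_{k-1})=0$. By the classical fact recalled in Remark \ref{variationofparameters}, the fundamental system of this homogeneous equation is exactly $\{f_1,\ldots,f_{k-1}\}$, and variation of parameters supplies the particular solution $F_{h_{k-1},f_1,\ldots,f_{k-1}}$ of (\ref{Ffunc1}). Hence the general solution is $f_k=\sum_{i=1}^{k-1}c_i f_i+F_{h_{k-1},f_1,\ldots,f_{k-1}}$.

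The constants are then fixed by orthogonality. For each $j\in\{1,\ldots,k-1\}$ I would expand $0=\rho(f_j,f_k)$ by linearity; the inductive orthogonality $\rho(f_j,f_i)=0$ for $i\neq j$ kills all cross terms and leaves $0=c_j\Vert f_j\Vert_\rho^2+\rho(f_j,F_{h_{k-1},f_1,\ldots,f_{k-1}})$, so that $c_j=-\rho(f_j,F_{h_{k-1},f_1,\ldots,f_{k-1}})/\Vert f_j\Vert_\rho^2$. Substituting these values gives (\ref{sol1}). Linear independence of the enlarged set is then immediate from the Wronski criterion, since $W(f_1,\ldots,f_k)=h_{k-1}\,W(f_1,\ldots,f_{k-1})$ is nowhere zero: $h_{k-1}$ has no zeros and $W(f_1,\ldots,f_{k-1})$ is nowhere zero by hypothesis. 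This simultaneously re-establishes the non-vanishing-Wronskian clause of the induction, and it forces $f_k\neq 0$, hence $\Vert f_k\Vert_\rho>0$.

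The step I expect to demand the most care is not any single calculation but the maintenance of the structural invariants across the induction. The division that turns (\ref{difeq1}) into a regular equation, the well-definedness of the integrals in $F_{h_{k-1},f_1,\ldots,f_{k-1}}$, and the independence of the output all rest on $W(f_1,\ldots,f_{k-1})$ being nowhere zero, a property propagated only because the Wronskian is set to $h_{k-1}W(f_1,\ldots,f_{k-1})$ with $h_{k-1}$ chosen to have no zeros. Likewise, the coefficients $c_j$ and the final norm $\Vert f_k\Vert_\rho$ are meaningful only if $F_{h_{k-1},f_1,\ldots,f_{k-1}}$ and each $f_j$ are genuine elements of the Hilbert space with strictly positive, finite $\rho$-norm; the positivity comes from linear independence, while the finiteness must be read out of the standing assumption on $F_{h_{k-1},f_1,\ldots,f_{k-1}}$ and the fact that $f_k$ is a finite linear combination of space elements. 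I would therefore fold the nowhere-vanishing Wronskian and the two-sided norm bounds into the inductive statement explicitly, so that each step both consumes and regenerates them.
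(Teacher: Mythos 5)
Your proposal is correct and follows essentially the same route the paper intends: the paper's own proof is literally the one line ``The proof is straightforward by induction on $N$,'' with the $k=2$ and $k=3$ cases worked out in the preceding exposition, and your argument is precisely that induction made explicit (cofactor expansion to get (\ref{difeq1}), variation of parameters with fundamental system $\{f_1,\ldots,f_{k-1}\}$ to get the general solution, orthogonality to pin down the constants, and the identity $W(f_1,\ldots,f_k)=h_{k-1}W(f_1,\ldots,f_{k-1})$ to propagate independence). One small caution: your parenthetical asserting that a nowhere-vanishing Wronskian is \emph{equivalent} to linear independence overstates the classical fact (only the implication from nonvanishing Wronskian to independence holds in general), but this does not damage the argument, since the invariant you actually carry through the induction is the nonvanishing Wronskian itself.
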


\begin{proof}
The proof is straightforward by induction on $N$.
\end{proof}

The following remark summarizes the benefits of orthogonalization in
function spaces.

\begin{remark}[Bases in Hilbert function spaces]
\label{orthogonalityremark} A set of linearly independent functions $%
\mathcal{F}_{N}=\{f_{1},...,f_{N}\}\subset \mathcal{F}_{\mathbb{M}}^{N,%
\mathbb{M}}$ provide a fundamental tool in mathematics, since it guarantees
that each function contributes in a unique manner, most notably in function
representation. For example, consider the space of functions spanned by $%
\mathcal{F}_{N}$ defined by%
\begin{equation}
\overline{\mathcal{F}}_{N}=span\mathcal{F}_{N}=\left\{ f\in \mathcal{F}_{%
\mathbb{M}}^{N,\mathbb{M}}:f=\sum\limits_{i=1}^{N}c_{i}f_{i},c_{i}\in
\mathbb{M}\right\} .  \label{Span}
\end{equation}%
Obviously, finding a unique representation of $f\in \overline{\mathcal{F}}%
_{N},$ for specific constants $c_{i}$, requires an additional property, for
example, that $\mathcal{F}_{N}$ is orthogonal with respect to some inner
product $\rho $. Then the constants in expansion (\ref{Span}) can be
uniquely determined via inner products, and as a result, orthogonality is an
essential property in forming a basis in a Hilbert function space.\newline
Having a function basis allows for simpler representation and computation of
other functions. In particular, orthogonality simplifies calculations, such
as projections or expansions, and ensures that the functions do not overlap,
making them easier to work with in various mathematical applications. For
example, orthogonality is useful in solving differential equations and in
Fourier series, where orthogonal functions help in decomposing functions
into simpler components.
\end{remark}

Before we discuss Wronski orthogonalization and ODEs, we discuss how the
Gram-Schmidt process is a special case of the Wronski process.

\begin{remark}[Gram-Schmidt process via Wronski process]
\label{relationGS}Given a set of linearly independent functions $\mathcal{U}%
_{N}=\{u_{1},...,$ $u_{N}\}\subset \mathcal{F}_{\mathbb{M}}^{N,\mathbb{M}},$
the Gram-Schmidt process allows us to construct an orthogonal set $\mathcal{V%
}_{N}=\{v_{1},...,v_{N}\}\subset \mathcal{F}_{\mathbb{M}}^{N,\mathbb{M}},$
as follows; start by setting $v_{1}(x)=u_{1}(x),$ and build the other
functions in $\mathcal{V}_{N}$ using%
\begin{equation}
v_{k}(x)=u_{k}(x)-\sum\limits_{i=1}^{k-1}\frac{\rho (u_{k},v_{i})}{\rho
(v_{i},v_{i})}v_{i}(x),  \label{GS1}
\end{equation}%
for all $k=2,3,...,N$. Note the geometrical interpretation of the
construction, where in order to compute $v_{k},$ we project $u_{k}$
orthogonally onto the subspace $\overline{\mathcal{V}}_{k-1}$ generated by $%
\{v_{1},...,v_{k-1}\},$ which is the same as the subspace $\overline{%
\mathcal{U}}_{k-1}$ generated by $\{u_{1},...,u_{k-1}\}.$ The function $%
v_{k} $ is then defined to be the difference between $u_{k}$ and this
projection, which is guaranteed to be orthogonal to all functions in $%
\overline{\mathcal{V}}_{k-1}$.\newline
We show that regardless of how we have arrived at the set $\mathcal{V}_{N},$
there exist weight functions $h_{n},$ $n=1,2,...,N-1,$ such that starting
with $f_{1}(x)=u_{1}(x),$ we can construct $\mathcal{F}_{N}=%
\{f_{1},...,f_{N}\},$ using the Wronski process and obtain the same subspace
as the Gram-Schmidt process, i.e., $\mathcal{F}_{N}=\mathcal{V}_{N}$.\newline
To see how the Wronski process leads to $\mathcal{V}_{N}$, start with $k=2$,
and use (\ref{sol1}) to write%
\begin{equation*}
f_{2}(x)=F_{h_{1},f_{1}}(x)-\frac{f_{1}(x)}{\left\Vert f_{1}\right\Vert
_{\rho }^{2}}\rho \left( f_{1},F_{h_{1},f_{1}}\right) ,
\end{equation*}%
where in order for $f_{2}=v_{2},$ we must have that the weight function $%
h_{1}$ satisfies (\ref{Ffunc0}), with%
\begin{eqnarray*}
u_{2}(x) &=&F_{h_{1},f_{1}}(x)=f_{1}(x)\int\limits_{x_{0}}^{x}\frac{h_{1}(t)%
}{f_{1}(t)}d\mu (t) \\
&=&u_{1}(x)\int\limits_{x_{0}}^{x}\frac{h_{1}(t)}{u_{1}(t)}d\mu (t),
\end{eqnarray*}%
so that differentiating with respect to $x$ both sides above, we obtain%
\begin{equation*}
h_{1}(x)=u_{1}(x)\frac{d}{dx}\left( \frac{u_{2}(x)}{u_{1}(x)}\right) .
\end{equation*}%
For $k=3,$ in order for $f_{3}=v_{3},$ equation (\ref{sol1}) leads to%
\begin{eqnarray*}
f_{3}(x) &=&F_{h_{2},f_{1},f_{2}}(x)-\frac{f_{1}(x)}{\left\Vert
f_{1}\right\Vert _{\rho }^{2}}\rho \left( f_{1},F_{h_{1},f_{1},f_{2}}\right)
\\
&&-\frac{f_{2}(x)}{\left\Vert f_{2}\right\Vert _{\rho }^{2}}\rho \left(
f_{2},F_{h_{2},f_{1},f_{2}}\right) ,
\end{eqnarray*}%
so that $h_{2}$ is obtained via equation (\ref{Ffunc1}), with $f_{1}=v_{1},$
and $f_{2}=v_{2},$ as%
\begin{eqnarray*}
u_{3}(x) &=&F_{h_{2},f_{1},f_{2}}(x) \\
&=&f_{1}(x)\int\limits_{x_{0}}^{x}\frac{W_{1}(f_{1},f_{2})(t)}{%
W(f_{1},f_{2})(t)}h_{2}(t)d\mu (t) \\
&&+f_{2}(x)\int\limits_{x_{0}}^{x}\frac{W_{2}(f_{1},f_{2})(t)}{%
W(f_{1},f_{2})(t)}h_{2}(t)d\mu (t) \\
&=&-v_{1}(x)\int\limits_{x_{0}}^{x}\frac{v_{2}(t)}{W(v_{1},v_{2})(t)}%
h_{2}(t)d\mu (t) \\
&&+v_{2}(x)\int\limits_{x_{0}}^{x}\frac{v_{1}(t)}{W(v_{1},v_{2})(t)}%
h_{2}(t)d\mu (t),
\end{eqnarray*}%
such that%
\begin{gather*}
\int\limits_{x_{0}}^{x}\frac{v_{1}(t)}{W(v_{1},v_{2})(t)}h_{2}(t)d\mu (t) \\
-\frac{v_{1}(x)}{v_{2}(x)}\int\limits_{x_{0}}^{x}\frac{v_{2}(t)}{%
W(v_{1},v_{2})(t)}h_{2}(t)d\mu (t)=\frac{u_{3}(x)}{v_{2}(x)}.
\end{gather*}%
Differentiating with respect to $x$ both sides above yields%
\begin{gather*}
\frac{d}{dx}\left( \frac{u_{3}(x)}{v_{2}(x)}\right) =\frac{v_{1}(x)}{%
W(v_{1},v_{2})(x)}h_{2}(x) \\
-\frac{d}{dx}\left( \frac{v_{1}(x)}{v_{2}(x)}\right) \int\limits_{x_{0}}^{x}%
\frac{v_{2}(t)}{W(v_{1},v_{2})(t)}h_{2}(t)d\mu (t) \\
-\frac{v_{1}(x)}{v_{2}(x)}\frac{v_{2}(x)}{W(v_{1},v_{2})(x)}h_{2}(x),
\end{gather*}%
which leads to%
\begin{equation*}
\frac{d}{dx}\left( \frac{u_{3}(x)}{v_{2}(x)}\right) =-\frac{d}{dx}\left(
\frac{v_{1}(x)}{v_{2}(x)}\right) \int\limits_{x_{0}}^{x}\frac{v_{2}(t)}{%
W(v_{1},v_{2})(t)}h_{2}(t)d\mu (t).
\end{equation*}%
Isolating the integral and differentiating once again with respect to $x$
yields%
\begin{equation*}
\frac{v_{2}(x)}{W(v_{1},v_{2})(x)}h_{2}(x)=-\frac{d}{dx}\left( \frac{\frac{d%
}{dx}\left( \frac{u_{3}(x)}{v_{2}(x)}\right) }{\frac{d}{dx}\left( \frac{%
v_{1}(x)}{v_{2}(x)}\right) }\right) ,
\end{equation*}%
so that we obtain%
\begin{equation*}
h_{2}(x)=-\frac{d}{dx}\left( \frac{\frac{d}{dx}\left( \frac{u_{3}(x)}{%
v_{2}(x)}\right) }{\frac{d}{dx}\left( \frac{v_{1}(x)}{v_{2}(x)}\right) }%
\right) \frac{W(v_{1},v_{2})(x)}{v_{2}(x)}.
\end{equation*}%
Continue in a similar fashion to obtain all the functions $%
\{h_{1},...,h_{N-1}\}$ which lead to a constructed set $\mathcal{F}%
_{N}=\{f_{1},...,f_{N}\},$ that satisfies $\mathcal{F}_{N}=\mathcal{V}_{N}$.
As a result, the Wronski process (which is agnostic about how one obtained
the linearly independent set $\mathcal{U}_{N}$ we start with), is flexible
enough to adjust the weight functions and produce the set $\mathcal{V}_{N}$\
constructed via the Gram-Schmidt process, and therefore it is a
generalization of the famed process.
\end{remark}

\section{Solutions to Ordinary Differential Equations}

\label{ExplicitSOlODE}In order to illustrate the importance of the proposed
Wronski orthogonalization construction, we present a first application of
the method in mathematics involving solutions to ODEs.

Let $c_{j}^{k}=\frac{k!}{(k-j)!j!},$ denote the binomial coefficient, and
take $L_{m}[\cdot ]$ to be the $m$th order linear, differential operator
defined by%
\begin{equation}
L_{m}[y]=p_{m}(x)\frac{d^{m}y}{dx^{m}}+p_{m-1}(x)\frac{d^{m-1}y}{dx^{m-1}}%
+...+p_{1}(x)\frac{dy}{dx}+p_{0}(x)y,  \label{LinOperatornthorder}
\end{equation}%
where the $\{p_{i}(x)\}$ are $\mathbb{M}$-valued functions and $p_{m}(x)\neq
0,$ on an interval $x\in I=(a,b)\subset \mathbb{M}$, and recall the
homogeneous ODE (HODE) given by%
\begin{equation}
L_{m}[y]=\sum\limits_{i=0}^{m}p_{i}(x)\frac{d^{i}y(x)}{dx^{i}}=0.
\label{homo1}
\end{equation}%
Define $m$ initial conditions%
\begin{equation}
C_{j}[y]:=\frac{d^{j}y}{dx^{j}}(x_{0})=A_{j},\text{ }j=0,1,2,...,m-1
\label{Initial1}
\end{equation}%
for some constants $A_{j}\in \mathbb{M},$ and $x_{0}\in I$.

We consider the linear, inhomogeneous ODE problem, of the form%
\begin{equation}
L_{m}[y]=h(x),\text{ under conditions }\mathcal{C},  \label{LODE1}
\end{equation}%
where $\mathcal{C}$ is shorthand for all conditions in (\ref{Initial1})$.$
Note here that if $\{p_{i}(x)\}_{i=0}^{m}$ and $h(x)$ satisfy certain
conditions, then there exists a unique solution to the problem (\ref{LODE1}%
); more precisely, when $\mathbb{M}=\Re ,$ continuity suffices (e.g., see
\cite{kelley2010theory}, Theorem 6.2), while for $\mathbb{M}=\mathbb{C}$,
one requires the functions to be analytic (e.g., see \cite%
{teschl2012ordinary}, Theorem 4.1) or have additional requirements on $L_{m}$
(e.g., see self-adjoint eigenfunction problems, \cite{zwillinger2021handbook}%
, pg 95).

The following theorem presents the solutions of the problem (\ref{LODE1}) in
terms of a fundamental and orthogonal set of solutions constructed by
Theorem \ref{thmwronski}, and based on a non-trivial solution of (\ref{homo1}%
). The proof is given in the Appendix (Section \ref{App1}). We will write $W(%
\mathcal{F}_{m})$ as a short hand for $W(y_{1},...,y_{m}),$ and similarly, $%
W_{k}(\mathcal{F}_{m})$ for $W_{k}(y_{1},...,y_{m}),$ where $\mathcal{F}%
_{m}=\{y_{1},...,y_{m}\}.$

\begin{theorem}
Assume that $\rho $ is an inner product such that $(\mathcal{F}_{\mathbb{M}%
}^{m,\mathbb{M}},\rho )$ is a Hilbert function space, and consider the
linear, inhomogeneous ODE problem (\ref{LODE1}). Further, assume that there
exists a non-zero function $y_{1}\in \mathcal{F}_{\mathbb{M}}^{m,\mathbb{M}%
}, $ with $0<\left\Vert y_{1}\right\Vert _{\rho }<+\infty ,$ such that $%
y_{1} $ is a non-trivial solution to (\ref{homo1}). Then\label%
{thmwronskiODEs} the set $\mathcal{F}_{m}=\{y_{1},$ $...,y_{m}\}$ $\subset
\mathcal{F}_{\mathbb{M}}^{m,\mathbb{M}}$, of linearly independent and
orthogonal functions with respect to $\rho ,$ constructed in Theorem \ref%
{thmwronski} based on this $y_{1},$ forms the set of fundamental and
orthogonal solutions to (\ref{homo1}), provided that conditions
\begin{gather}
\sum\limits_{r=0}^{m-1-u}c_{r}^{u+r}\sum%
\limits_{i=u+r+1}^{m}p_{i}(x)c_{u+r+1}^{i}\sum\limits_{j=1}^{k-1}\frac{%
\partial ^{i-u-r-1}y_{j}(x)}{\partial ^{i-u-r-1}x}  \label{CondhHODE} \\
\frac{d^{r}}{dx^{r}}\left[ \frac{W_{j}(\mathcal{F}_{k-1})(x)}{W(\mathcal{F}%
_{k-1})(x)}\right] \in \mathcal{F}_{\mathbb{M}}^{m,\mathbb{M}},  \notag
\end{gather}%
for all $u=0,1,...,m-1$, hold and $h_{k-1}$ is chosen to be a solution to
the HODE\
\begin{gather}
\sum\limits_{u=0}^{m-1}\sum\limits_{r=0}^{m-1-u}c_{r}^{u+r}\sum%
\limits_{i=u+r+1}^{m}p_{i}(x)c_{u+r+1}^{i}\sum\limits_{j=1}^{k-1}\frac{%
\partial ^{i-u-r-1}y_{j}(x)}{\partial ^{i-u-r-1}x}  \label{HkHODE} \\
\frac{d^{r}}{dx^{r}}\left[ \frac{W_{j}(\mathcal{F}_{k-1})(x)}{W(\mathcal{F}%
_{k-1})(x)}\right] \frac{d^{u}h_{k-1}(x)}{d^{u}x}=0,  \notag
\end{gather}%
for each $k=2,...,m.$ The totality of solutions to problem (\ref{LODE1}) is
then obtained using the process described in Remark \ref%
{variationofparameters}, i.e.,%
\begin{equation*}
y(x)=y_{0}(x)+\sum\limits_{i=1}^{m}c_{i}y_{i}(x),
\end{equation*}%
$x\in I\subset \mathbb{M}$, where $y_{0}(x)$ is a specific solution of (\ref%
{LODE1}), and $c_{1},c_{2},...,c_{n}$ are real constants that are determined
using the initial conditions $\mathcal{C}$. The specific solution can be
written in terms of Wronskians as%
\begin{equation*}
y_{0}(x)=\sum\limits_{k=1}^{m}y_{k}(x)\int\limits_{x_{0}}^{x}\frac{W_{k}(%
\mathcal{F}_{m})(t)}{W(\mathcal{F}_{m})(t)}\frac{h(t)}{p_{m}(t)}d\mu (t),
\end{equation*}%
for any $x_{0}\in I$.
\end{theorem}

We write $y^{\prime },$ $y^{\prime \prime },$ and so forth, for the
different order derivatives of $y$ wrt $x$. We collect the solutions to the
second order HODE explicitly, in the following remark.

\begin{remark}[Second order HODEs]
\label{2ndorderHODE} Let $m=2,$ and consider the homogeneous ODE%
\begin{equation}
p_{2}y^{\prime \prime }+p_{1}y^{\prime }+p_{0}y=0,  \label{Homo2rndorder}
\end{equation}%
with $p_{2}\neq 0.$ In this case, we need to find $h_{1}$ that is a solution
to the HODE (\ref{HkHODE}), which for $m=2$ reduces to%
\begin{equation*}
p_{2}h_{1}^{\prime }+\left[ p_{1}+p_{2}\frac{y_{1}^{\prime }}{y_{1}}\right]
h_{1}=0.
\end{equation*}%
Multiplying both sides with $y_{1},$ we obtain the HODE%
\begin{equation}
p_{2}y_{1}h_{1}^{\prime }+\left[ p_{1}y_{1}+p_{2}y_{1}^{\prime }\right]
h_{1}=0  \label{2ndorderHODE1}
\end{equation}%
with solution%
\begin{equation}
h_{1}(x)=\frac{1}{\left\vert y_{1}(x)\right\vert }\exp \left\{
-\int\limits_{x_{0}}^{x}\frac{p_{1}(s)}{p_{2}(s)}d\mu (s)\right\} \neq 0.
\label{2ndorderHODE1h1}
\end{equation}%
Consequently, using Equation (\ref{SolLinInded1}), we find
\begin{equation}
y_{2}(x)=\left[ \int\limits_{x_{0}}^{x}\frac{1}{\left( y_{1}(t)\right) ^{2}}%
e^{-\int\limits_{x_{0}}^{t}\frac{p_{1}(s)}{p_{2}(s)}d\mu (s)}d\mu
(t)-\lambda _{1}\right] y_{1}(x),  \label{LinIndepy_2}
\end{equation}%
where $\lambda _{1}$ is given by \ref{Lambdajs}, and $\mathcal{F}%
_{2}=\{y_{1},y_{2}\}$ is the set of fundamental and orthogonal set of
solutions to the HODE\ (\ref{Homo2rndorder}).\newline
In order to appreciate the underlying details that might be lost in the
presentation of the general approach of Equation (\ref{HkHODE}), we will
reach the same answer via the reduction of order method. Let $y_{1}$ be a
known solution to (\ref{Homo2rndorder}), with $y_{2}$ given by (\ref%
{SolLinInded1}). We will show that there exists $h_{1}\neq 0,$ such that $%
\mathcal{F}_{2}=\{y_{1},y_{2}\}$ as defined via $y_{1}$ and $h_{1},$ form
the set of fundamental and orthogonal solutions of the HODE (\ref%
{Homo2rndorder}). Then we have%
\begin{equation*}
y_{2}(x)=\left[ G_{1,1}(x)-\lambda _{1}\right] y_{1}(x)=\left[
\int\limits_{x_{0}}^{x}\frac{h_{1}(t)}{y_{1}(t)}d\mu (t)-\lambda _{1}\right]
y_{1}(x),
\end{equation*}%
$x,x_{0}\in I,$ such that (\ref{Homo2rndorder}) is satisfied. Taking
derivatives of $y_{2}$ with respect to $x$, yields%
\begin{equation}
y_{2}^{\prime }=G_{1,1}^{\prime }y_{1}+\left[ G_{1,1}-\lambda _{1}\right]
y_{1}^{\prime },  \label{firstderiv1}
\end{equation}%
and%
\begin{equation}
y_{2}^{\prime \prime }=G_{1,1}^{\prime \prime }y_{1}+2G_{1,1}^{\prime
}y_{1}^{\prime }+\left[ G_{1,1}-\lambda _{1}\right] y_{1}^{\prime \prime },
\label{secondderiv1}
\end{equation}%
so that%
\begin{eqnarray*}
0 &=&p_{2}G_{1,1}^{\prime \prime }y_{1}+2p_{2}G_{1,1}^{\prime }y_{1}^{\prime
}+p_{2}\left[ G_{1,1}-\lambda _{1}\right] y_{1}^{\prime \prime } \\
&&+p_{1}G_{1,1}^{\prime }y_{1}+p_{1}\left[ G_{1,1}-\lambda _{1}\right]
y_{1}^{\prime }+p_{0}\left[ G_{1,1}-\lambda _{1}\right] y_{1} \\
&=&\left[ G_{1,1}-\lambda _{1}\right] \left[ p_{2}y_{1}^{\prime \prime
}+p_{1}y_{1}^{\prime }+p_{0}y_{1}\right] + \\
&&p_{2}G_{1,1}^{\prime \prime }y_{1}+2p_{2}G_{1,1}^{\prime }y_{1}^{\prime
}+p_{1}G_{1,1}^{\prime }y_{1},
\end{eqnarray*}%
and since $y_{1}$ is a solution of (\ref{Homo2rndorder}), we must have%
\begin{equation}
p_{2}y_{1}G_{1,1}^{\prime \prime }+\left[ 2p_{2}y_{1}^{\prime }+p_{1}y_{1}%
\right] G_{1,1}^{\prime }=0.  \label{1rstorderredorder}
\end{equation}%
Now note that%
\begin{equation*}
G_{1,1}^{\prime }=\frac{h_{1}}{y_{1}},\text{ and }G_{1,1}^{\prime \prime }=%
\frac{h_{1}^{\prime }y_{1}-h_{1}y_{1}^{\prime }}{y_{1}^{2}},
\end{equation*}%
so that using the latter in (\ref{1rstorderredorder}), leads to%
\begin{equation*}
p_{2}y_{1}h_{1}^{\prime }+\left[ p_{2}y_{1}^{\prime }+p_{1}y_{1}\right]
h_{1}=0,
\end{equation*}%
with the solution given in Equation (\ref{2ndorderHODE1h1}). Consequently,
the proposed set $\mathcal{F}_{2}=\{y_{1},y_{2}\}$ forms a set of
fundamental and orthogonal solutions to (\ref{Homo2rndorder}). In addition,
we note that $\mathcal{F}_{2}$ is a basis for $\overline{\mathcal{F}}_{2},$
with the basis elements obtained by solving the HODE (\ref{2ndorderHODE1}).
Finally, using $\mathcal{F}_{2}$ we can apply Remark \ref%
{variationofparameters} for $m=2$ and obtain the totality of solutions to
the problem (\ref{Inhomonorder}).
\end{remark}

In order to illustrate the importance of the latter theorem, we consider
some special cases in the following example.

\begin{example}[Some classic ODEs]
\label{ODEexample}We consider problem (\ref{LODE1}) and the construction of
a general solution via Wronski orthogonalization, under different scenarios.
In what follows, we use the standard inner product%
\begin{equation}
\rho _{2}(f_{1},f_{2})=\int\limits_{a}^{b}f_{1}(x)f_{2}(x)d\mu (x),
\label{Rho_2innerproduct}
\end{equation}%
over some interval $I=[a,b]\subset \Re ,$ with the additional assumptions $%
\rho _{2}(f_{i},f_{i})^{2}=\left\Vert f_{i}\right\Vert
_{2}^{2}=\int\limits_{\Re }|f_{i}|^{2}d\mu (x)<+\infty ,$ $i=1,2$, i.e., the
Lebesgue $\mathcal{L}^{2}([a,b])$-space.

\begin{enumerate}
\item Airy's equation: Assume that $m=2$, and consider Airy's equation given
by%
\begin{equation}
y^{\prime \prime }-xy=0,\text{ }x>0.  \label{Airyeq1}
\end{equation}%
It is known (see \cite{zwillinger2021handbook}, p. 339), that the Airy
functions%
\begin{equation*}
Ai(x)=\frac{1}{\pi }\int\limits_{0}^{\infty }\cos \left( \frac{t^{3}}{3}%
+xt\right) d\mu (t),
\end{equation*}%
of the first kind, and%
\begin{equation*}
Bi(x)=\frac{1}{\pi }\int\limits_{0}^{\infty }e^{-\frac{t^{3}}{3}+xt}\cos
\left( \frac{t^{3}}{3}+xt\right) d\mu (t),
\end{equation*}%
of the second kind, are two linearly independent solutions of (\ref{Airyeq1}%
).\newline
We present two new fundamental sets of solutions via Wronski
orthogonalization. Starting with $Ai(x)$ and applying Remark \ref%
{2ndorderHODE}, with $x_{0}>0$, $p_{2}(x)=1,$ $p_{1}(x)=0,$ $p_{0}(x)=-x,$
and $y_{1}(x)=$ $Ai(x),$ $x>0.$ Then from Equation (\ref{LinIndepy_2}) we
have%
\begin{gather*}
y_{2}(x)=\left[ \int\limits_{x_{0}}^{x}\frac{1}{\left( Ai(t)\right) ^{2}}%
d\mu (t)-\lambda _{1}\right] Ai(x) \\
=Ai(x)\int\limits_{x_{0}}^{x}\frac{1}{\left( Ai(t)\right) ^{2}}d\mu (t) \\
-\frac{Ai(x)}{\left\Vert Ai\right\Vert _{\rho _{2}}^{2}}\int\limits_{0}^{+%
\infty }\left( Ai(x)\right) ^{2}\int\limits_{x_{0}}^{x}\frac{1}{\left(
Ai(t)\right) ^{2}}d\mu (t)d\mu (x),
\end{gather*}%
since $\lambda _{1}=\rho _{2}\left( Ai,F_{h_{1},Ai}\right) /\left\Vert
Ai\right\Vert _{\rho _{2}}^{2},$ where%
\begin{equation*}
F_{h_{1},Ai}(x)=Ai(x)\int\limits_{x_{0}}^{x}\frac{1}{\left( Ai(t)\right) ^{2}%
}d\mu (t),
\end{equation*}%
and%
\begin{equation*}
\rho _{2}\left( Ai,F_{h_{1},Ai}\right) =\int\limits_{0}^{+\infty }\left(
Ai(x)\right) ^{2}\int\limits_{x_{0}}^{x}\frac{1}{\left( Ai(t)\right) ^{2}}%
d\mu (t)d\mu (x),
\end{equation*}%
so that $\{y_{1},y_{2}\}$ forms a fundamental and orthogonal set of
solutions of the ODE (\ref{Airyeq1}).\newline
On the other hand, starting with the solution $y_{1}(x)=Bi(x),$ $x>0,$ the
second solution of the fundamental and orthogonal set is given by%
\begin{equation*}
y_{2}(x)=\left[ \int\limits_{x_{0}}^{x}\frac{1}{\left( Bi(t)\right) ^{2}}%
d\mu (t)-\lambda _{1}\right] Bi(x),
\end{equation*}%
with $\lambda _{1}=\rho _{2}\left( Bi,F_{h_{1},Bi}\right) /\left\Vert
Bi\right\Vert _{\rho _{2}}^{2},$ where%
\begin{equation*}
F_{h_{1},Bi}(x)=Bi(x)\int\limits_{x_{0}}^{x}\frac{1}{\left( Bi(t)\right) ^{2}%
}d\mu (t),
\end{equation*}%
and%
\begin{equation*}
\rho _{2}\left( Bi,F_{h_{1},Bi}\right) =\int\limits_{0}^{+\infty }\left(
Bi(x)\right) ^{2}\int\limits_{x_{0}}^{x}\frac{1}{\left( Bi(t)\right) ^{2}}%
d\mu (t)d\mu (x).
\end{equation*}%
Airy functions are applied in many branches of classical and quantum physics
(see \cite{vallee2010airy}). They are associated with wave equations with
turning points, for which asymptotic solutions are exponential on one side
and oscillatory on the other. Note what is accomplished by the Wronski
process in this case; it offers two alternative orthogonal and linearly
independent functions to the classic Airy functions $Ai(x)$ and $Bi(x),$
that can be used by physicists to study wave motion, particularly in
diffraction theory.

\item Euler Equations: Assume that $m=2$, and consider the HODE%
\begin{equation}
x^{2}y^{\prime \prime }-xy^{\prime }+y=0,  \label{3rdorderex}
\end{equation}%
$x\in I=(0,1]$. It is easy to verify that the set $\{y_{1},y_{2}\},$ with $%
y_{1}(x)=x,$ $y_{2}(x)=x\log x,$ forms a set of fundamental solutions to (%
\ref{3rdorderex}), because $W(y_{1},y_{2})=x\neq 0,$ $x\in I$. Then the
general solution is given by $y(x)=c_{1}x+c_{2}x\log x,$ where $%
c_{1},c_{2}\in \Re .$\newline
However, using the standard inner product (\ref{Rho_2innerproduct}), the
functions $\{x,$ $x\log x\}$ are not orthogonal since $\rho _{2}(x,$ $x\log
x)=2/9$.\newline
Using Theorem \ref{thmwronski}, we build a set of fundamental solutions that
are also orthogonal. Starting with $y_{1}(x)=x,$ we have $\left\Vert
y_{1}\right\Vert _{\rho _{2}}^{2}=\frac{1}{3}>0,$ which is finite. We will
find the function $h_{1},$ in order for $\mathcal{F}_{2}=\{y_{1},y_{2}\}$ to
emerge as the set of fundamental and orthogonal solutions. From Remark \ref%
{2ndorderHODE}, we have the HODE (\ref{2ndorderHODE1}) which reduces to%
\begin{equation}
x^{3}h_{1}^{\prime }=0\Rightarrow h_{1}^{\prime }=0,
\end{equation}%
with solution $h_{1}(x)=c,$ for some real constant $c\neq 0.$ Then, it is
straightforward to see that for some $x_{0}\in I,$ we have%
\begin{equation*}
G_{1,1}(x)=c\log \frac{x}{x_{0}},
\end{equation*}%
and%
\begin{eqnarray*}
\lambda _{1} &=&\frac{1}{\left\Vert y_{1}\right\Vert _{\rho _{2}}^{2}}\rho
_{2}\left( y_{1},F_{h_{1},y_{1}}\right) =3\rho _{2}\left(
y_{1},y_{1}G_{1,1}\right) \\
&=&-c(1+\log x_{0}),
\end{eqnarray*}%
which leads to%
\begin{equation*}
y_{2}(x)=\left[ G_{1,1}(x)-\lambda _{1}\right] y_{1}(x)=c\left[ \log x+1%
\right] x.
\end{equation*}%
Now we can verify orthogonality using (\ref{Rho_2innerproduct}); we have
\begin{eqnarray*}
\rho _{2}(y_{1},y_{2}) &=&\int\limits_{0}^{1}x^{2}c\left[ \log x+1\right]
xd\mu (x) \\
&=&-\frac{c}{3}-\frac{c}{3}\log x_{0}+\frac{c}{3}+\frac{c}{3}\log x_{0}=0.
\end{eqnarray*}%
As a result, the fundamental and orthogonal set of solutions is $\{x,c\left[
\log x+1\right] x\},$ for some real constant $c\neq 0$. Note that the
Wronski orthogonalization process does not lead to a unique fundamental and
orthogonal set of solutions.
\end{enumerate}
\end{example}

Based on the theoretical development in this section, we can see that the
Wronski orthogonalization process when applied to ODEs, provides a set of
linearly independent and orthogonal set of functions $\mathcal{F}_{N}$ as
the fundamental set of solutions of a HODE. Combining these two important
properties makes $\mathcal{F}_{N}$ a basis for the spanned space $\overline{%
\mathcal{F}}_{N}$ of $\mathcal{F}_{N}.$ Therefore, this observation leads to
the following general conjecture regarding bases functions in Hilbert
function spaces.

\begin{conjecture}[Bases as solutions to HODEs]
\label{ConjBasis}Let $(\mathcal{F}_{\mathbb{M}}^{\mathbb{M}},\rho )$ be a
Hilbert function space. Assume that $\mathcal{E}=\{e_{i}\}_{i\in I}\subset
\mathcal{F}_{\mathbb{M}}^{\mathbb{M}}$, with $\overline{\mathcal{E}}=%
\mathcal{F}_{\mathbb{M}}^{\mathbb{M}},$ is an orthonormal basis of $\mathcal{%
F}_{\mathbb{M}}^{\mathbb{M}},$ where the index set $I$ is countable or
uncountable. Then, there exists a unique linear HODE (of potentially
infinite order) such that its fundamental set of solutions via Wronski
orthogonalization is the set $\mathcal{E}.$
\end{conjecture}

Since the basis function set $\mathcal{E}$ is constructed via Wronski
orthogonalization, it will be aptly called the Wronski basis of $\mathcal{F}%
_{\mathbb{M}}^{\mathbb{M}}.$ In view of Theorem \ref{thmwronskiODEs}, Remark %
\ref{2ndorderHODE} and Example \ref{ODEexample}, the conjecture is satisfied
trivially when $I$ is a finite set. More precisely, the variation of
parameters method (Remark \ref{variationofparameters}) tells us exactly the
form of the unique HODE; when $I$ is a finite set, say without loss of
generality $I=\{1,2,...,N\},$ then the HODE is given by Equation (\ref%
{HomogeneousWronski}), i.e.,%
\begin{equation}
\frac{W(\mathcal{E}_{N},y)}{W(\mathcal{E}_{N})}=\frac{%
W(e_{1},e_{2},...,e_{N},y)}{W(e_{1},e_{2},...,e_{N})}=0,  \label{basisHode}
\end{equation}%
a HODE in $y$, with its set of fundamental and orthogonal solutions being
the basis $\mathcal{E}_{N}=\{e_{1},...,e_{N}\}$ of the spanned space $%
\overline{\mathcal{E}}_{N}$. Let us rewrite the Wronskian in terms of the
determinant expansion along the $(N+1)$-column, i.e., by expanding with
respect to the column corresponding to $y$ via%
\begin{gather}
W(\mathcal{E}_{N},y)=\left\vert
\begin{tabular}{llll}
$e_{1}(x)$ & $...$ & $e_{N}(x)$ & $y(x)$ \\
$\frac{de_{1}(x)}{dx}$ & $...$ & $\frac{de_{N}(x)}{dx}$ & $\frac{dy(x)}{dx}$
\\
$...$ &  & $...$ &  \\
$\frac{d^{N-1}e_{1}(x)}{dx^{N-1}}$ & $...$ & $\frac{d^{N-1}e_{N}(x)}{dx^{N-1}%
}$ & $\frac{d^{N-1}y(x)}{dx^{N-1}}$ \\
$\frac{d^{N}e_{1}(x)}{dx^{N}}$ & $...$ & $\frac{d^{N}e_{N}(x)}{dx^{N}}$ & $%
\frac{d^{N}y(x)}{dx^{N}}$%
\end{tabular}%
\right\vert  \notag \\
=\sum\limits_{i=1}^{N+1}(-1)^{i+N+1}M_{i,N+1}(\mathcal{E}_{N},y)\frac{%
d^{i-1}y(x)}{dx^{i-1}},  \label{WronskiHODE1}
\end{gather}%
where $M_{ij}(\mathcal{E}_{N},y)$ the $(i,j)$-minor$.$

In order to generalize to the countably infinite case, we send $N\rightarrow
\infty ,$ so that%
\begin{equation}
W(\mathcal{E}_{\infty },y)=\underset{N\rightarrow \infty }{\lim }%
\sum\limits_{i=1}^{N+1}(-1)^{i+N+1}M_{i,N+1}(\mathcal{E}_{N},y)\frac{%
d^{i-1}y(x)}{dx^{i-1}},  \label{InfiniteWronskian}
\end{equation}%
which will be well defined if the following conditions hold;

\textbf{Wronski basis conditions:}

\begin{enumerate}
\item the functions $y,e_{1},e_{2},...,$ are analytic,

\item $W(\mathcal{E}_{N},y)$ exists, is non-zero, and is finite for all $%
N\in \mathbb{N}$, and

\item the infinite sum in Equation (\ref{InfiniteWronskian}) exists and is
finite.
\end{enumerate}

The countable case and uncountable case for the index set $I$ requires
further investigation and will be tackled elsewhere.

\section{Concluding Remarks}

The proposed Wronski orthogonalization process was illustrated to be a more
streamlined alternative procedure to the classic Gram-Schmidt
orthogonalization process, since we do not require a set of linearly
independent elements of the Hilbert space before we begin the construction.
In addition, the proposed method provides great flexibility, since we can
choose any weight functions we wish, under mild assumptions, and tailor them
to the needs of the context in which the process is used.

There are many applications that were not presented in this paper, since our
purpose was to introduce the new method. In particular, the proposed
construction paves the way for the creation, study and application of new
bases in Hilbert function spaces. Moreover, many classic orthogonal
functions for specific Hilbert function spaces, are special cases for
appropriate choice of the weight functions, e.g., classic polynomials. These
are subjects of great interest in further illustrating the importance of the
Wronski orthogonalization process, and will be investigated elsewhere.

\section*{Acknowledgments}

I am grateful to Professor Petros Valettas, Department of Mathematics,
University of Missouri, for his constructive comments and suggestions on an
earlier version of the manuscript.

\section*{Statements and Declarations}

I have no conflict of interest with the results in this paper. This research
received no external funding. There is no data available or required by this
paper.

\section{Appendix}

\subsection{Proof of Theorem \protect\ref{thmwronskiODEs}}

\label{App1}Assume that there exists $y_{1}\in \mathcal{F}_{\mathbb{M}}^{m,%
\mathbb{M}},$ with $0<\left\Vert f_{1}\right\Vert _{\rho }<+\infty .$ Using
Theorem \ref{thmwronski}, we can build a set of linearly independent
functions $\mathcal{F}_{m}$ $=\{y_{1},...,y_{m}\}$ $\subset \mathcal{F}_{%
\mathbb{M}}^{m,\mathbb{M}}$, where%
\begin{eqnarray}
y_{k}(x)
&=&F_{h_{k-1},y_{1},y_{2},...,y_{k-1}}(x)-\sum\limits_{j=1}^{k-1}\lambda
_{j}y_{j}(x)  \label{SolLinInded1} \\
&=&\sum\limits_{j=1}^{k-1}\left[ G_{j,k-1}(x)-\lambda _{j}\right]
y_{j}(x)\neq 0,  \notag
\end{eqnarray}%
with%
\begin{equation}
F_{h_{k-1},y_{1},y_{2},...,y_{k-1}}(x)=\sum%
\limits_{j=1}^{k-1}G_{j,k-1}(x)y_{j}(x),  \label{Ffuncs}
\end{equation}%
\begin{equation}
G_{j,k-1}(x)=\int\limits_{x_{0}}^{x}\frac{W_{j}(\mathcal{F}_{k-1})(t)}{W(%
\mathcal{F}_{k-1})(t)}h_{k-1}(t)d\mu (t),  \label{Gfuncs}
\end{equation}%
and%
\begin{equation}
\lambda _{j}=\rho \left( y_{j},F_{h_{k-1},y_{1},y_{2},...,y_{k-1}}\right)
/\left\Vert y_{j}\right\Vert _{\rho }^{2},  \label{Lambdajs}
\end{equation}%
$j=1,...,k-1,$ $k=2,...,m.$ The set $\{y_{2},...,y_{m}\}$ are not
necessarily solutions to (\ref{homo1}), however, we will find conditions on
the functions $\mathcal{H}=\{h_{1},...,h_{m-1}\},$ in order for $\mathcal{F}%
_{m}$ to emerge as the set of fundamental and orthogonal solutions of the
HODE (\ref{homo1}).

Assume that $\mathcal{F}_{m}$ denotes a set of fundamental solutions to (\ref%
{homo1}), i.e., $\mathcal{F}_{m}$ are linearly independent and $%
L_{m}[y_{1}]=...=L_{m}[y_{m}]=0.$ Note that what follows is essentially the
reduction of order method (see \cite{zwillinger2021handbook}). By linearity
of $L_{m},$ using Equation (\ref{SolLinInded1}) in Equation (\ref%
{LinOperatornthorder}), yields%
\begin{eqnarray*}
L_{m}[y_{k}] &=&\sum\limits_{i=0}^{m}p_{i}(x)\frac{d^{i}y_{k}(x)}{dx^{i}} \\
&=&\sum\limits_{i=0}^{m}p_{i}(x)\frac{%
d^{i}F_{h_{k-1},y_{1},y_{2},...,y_{k-1}}(x)}{dx^{i}} \\
&&-\sum\limits_{i=0}^{m}p_{i}(x)\sum\limits_{j=1}^{k-1}\lambda _{j}\frac{%
d^{i}y_{j}(x)}{dx^{i}} \\
&=&\sum\limits_{i=0}^{m}p_{i}(x)\frac{%
d^{i}F_{h_{k-1},y_{1},y_{2},...,y_{k-1}}(x)}{dx^{i}} \\
&&-\sum\limits_{j=1}^{k-1}\lambda _{j}\sum\limits_{i=0}^{m}p_{i}(x)\frac{%
d^{i}y_{j}(x)}{dx^{i}} \\
&=&\sum\limits_{i=0}^{m}p_{i}(x)\frac{%
d^{i}F_{h_{k-1},y_{1},y_{2},...,y_{k-1}}(x)}{dx^{i}} \\
&&-\sum\limits_{j=1}^{k-1}\lambda _{j}L_{m}[y_{j}],
\end{eqnarray*}%
and since $L_{m}[y_{j}]=0,$ $j=1,...,k-1$, we need to find a set of
functions $\mathcal{H}$ such that%
\begin{equation}
L_{m}[y_{k}]=\sum\limits_{i=0}^{m}p_{i}(x)\frac{%
d^{i}F_{h_{k-1},y_{1},y_{2},...,y_{k-1}}(x)}{dx^{i}}=0,
\label{FunctionSystem1}
\end{equation}%
for all $k=2,...,m.$ Using Equation (\ref{Ffuncs}) we can rewrite (\ref%
{FunctionSystem1}) as%
\begin{eqnarray*}
0 &=&\sum\limits_{i=0}^{m}p_{i}(x)\frac{d^{i}}{dx^{i}}\left[
\sum\limits_{j=1}^{k-1}G_{j,k-1}(x)y_{j}(x)\right] \\
&=&\sum\limits_{i=0}^{m}p_{i}(x)\sum\limits_{j=1}^{k-1}\frac{d^{i}\left[
G_{j,k-1}(x)y_{j}(x)\right] }{dx^{i}},
\end{eqnarray*}%
for all $k=2,...,m.$

An appeal to the general Leibniz rule for differentiation gives%
\begin{equation}
\frac{d^{i}\left[ G_{j,k-1}(x)y_{j}(x)\right] }{dx^{i}}=\sum%
\limits_{l=0}^{i}c_{l}^{i}\frac{d^{l}G_{j,k-1}(x)}{dx^{l}}\frac{%
d^{i-l}y_{j}(x)}{d^{i-l}x},  \label{LeibnizRule1}
\end{equation}%
so that separating the term for $l=0$, we obtain%
\begin{eqnarray*}
0
&=&\sum\limits_{i=0}^{m}p_{i}(x)\sum\limits_{j=1}^{k-1}\sum%
\limits_{l=0}^{i}c_{l}^{i}\frac{d^{i-l}y_{j}(x)}{d^{i-l}x}\frac{%
d^{l}G_{j,k-1}(x)}{dx^{l}} \\
&=&\sum\limits_{j=1}^{k-1}\sum\limits_{i=0}^{m}p_{i}(x)\sum%
\limits_{l=0}^{i}c_{l}^{i}\frac{d^{i-l}y_{j}(x)}{d^{i-l}x}\frac{%
d^{l}G_{j,k-1}(x)}{dx^{l}} \\
&=&\sum\limits_{j=1}^{k-1}G_{j,k-1}(x)\sum\limits_{i=0}^{m}p_{i}(x)c_{0}^{i}%
\frac{d^{i}y_{j}(x)}{d^{i}x} \\
&&+\sum\limits_{j=1}^{k-1}\sum\limits_{l=1}^{m}\frac{d^{l}G_{j,k-1}(x)}{%
dx^{l}}\sum\limits_{i=l}^{m}p_{i}(x)c_{l}^{i}\frac{d^{i-l}y_{j}(x)}{d^{i-l}x}%
,
\end{eqnarray*}%
where by assumption%
\begin{equation*}
\sum\limits_{i=0}^{m}p_{i}(x)c_{0}^{i}\frac{d^{i}y_{j}(x)}{d^{i}x}%
=\sum\limits_{i=0}^{m}p_{i}(x)\frac{d^{i}y_{j}(x)}{d^{i}x}=L_{m}[y_{j}]=0,
\end{equation*}%
for all $j=1,...,k-1$. Consequently, Equation (\ref{FunctionSystem1}) is
written as%
\begin{equation*}
\sum\limits_{j=1}^{k-1}\sum\limits_{l=1}^{m}\frac{d^{l}G_{j,k-1}(x)}{dx^{l}}%
\sum\limits_{i=l}^{m}p_{i}(x)c_{l}^{i}\frac{d^{i-l}y_{j}(x)}{d^{i-l}x}=0.
\end{equation*}%
Now define%
\begin{equation*}
W_{j,k-1}(x)=\frac{dG_{j,k-1}(x)}{dx}=\frac{W_{j}(\mathcal{F}_{k-1})(x)}{W(%
\mathcal{F}_{k-1})(x)}h_{k-1}(x),
\end{equation*}%
so that we reduce the order by one, and consider the homogeneous ODE in $%
h_{k-1}$ given by%
\begin{equation*}
\sum\limits_{j=1}^{k-1}\sum\limits_{l=0}^{m-1}\sum%
\limits_{i=l+1}^{m}p_{i}(x)c_{l+1}^{i}\frac{\partial ^{i-l-1}y_{j}(x)}{%
\partial ^{i-l-1}x}\frac{\partial ^{l}W_{j,k-1}(x)}{\partial x^{l}}=0,
\end{equation*}%
and after some rearranging%
\begin{equation*}
\sum\limits_{l=0}^{m-1}\sum\limits_{i=l+1}^{m}p_{i}(x)c_{l+1}^{i}\sum%
\limits_{j=1}^{k-1}\frac{\partial ^{l}W_{j,k-1}(x)}{\partial x^{l}}\frac{%
\partial ^{i-l-1}y_{j}(x)}{\partial ^{i-l-1}x}=0.
\end{equation*}%
Another appeal to Leibniz rule yields%
\begin{gather*}
\frac{d^{l}W_{j,k-1}(x)}{dx^{l}}=\frac{d^{l}}{dx^{l}}\left[ \frac{W_{j}(%
\mathcal{F}_{k-1})(x)}{W(\mathcal{F}_{k-1})(x)}h_{k-1}(x)\right] \\
=\sum\limits_{r=0}^{l}c_{r}^{l}\frac{d^{r}}{dx^{r}}\left[ \frac{W_{j}(%
\mathcal{F}_{k-1})(x)}{W(\mathcal{F}_{k-1})(x)}\right] \frac{%
d^{l-r}h_{k-1}(x)}{d^{l-r}x},
\end{gather*}%
Therefore, the desired $h_{k-1}$ is the solution of the HODE%
\begin{gather*}
\sum\limits_{l=0}^{m-1}\sum\limits_{i=l+1}^{m}p_{i}(x)c_{l+1}^{i}\sum%
\limits_{j=1}^{k-1}\frac{\partial ^{i-l-1}y_{j}(x)}{\partial ^{i-l-1}x}%
\sum\limits_{r=0}^{l} \\
c_{r}^{l}\frac{d^{r}}{dx^{r}}\left[ \frac{W_{j}(\mathcal{F}_{k-1})(x)}{W(%
\mathcal{F}_{k-1})(x)}\right] \frac{d^{l-r}h_{k-1}(x)}{d^{l-r}x}=0,
\end{gather*}%
which can be written after changing the order of summation for $l$ and $r$ as%
\begin{gather*}
\sum\limits_{r=0}^{m-1}\sum\limits_{l=r}^{m-1}\sum%
\limits_{i=l+1}^{m}p_{i}(x)c_{l+1}^{i}\sum\limits_{j=1}^{k-1}\frac{\partial
^{i-l-1}y_{j}(x)}{\partial ^{i-l-1}x} \\
c_{r}^{l}\frac{d^{r}}{dx^{r}}\left[ \frac{W_{j}(\mathcal{F}_{k-1})(x)}{W(%
\mathcal{F}_{k-1})(x)}\right] \frac{d^{l-r}h_{k-1}(x)}{d^{l-r}x}=0.
\end{gather*}%
Letting $u=l-r$, we have%
\begin{gather*}
\sum\limits_{r=0}^{m-1}\sum\limits_{u=0}^{m-1-r}c_{r}^{u+r}\sum%
\limits_{i=u+r+1}^{m}p_{i}(x)c_{u+r+1}^{i}\sum\limits_{j=1}^{k-1}\frac{%
\partial ^{i-u-r-1}y_{j}(x)}{\partial ^{i-u-r-1}x} \\
\frac{d^{r}}{dx^{r}}\left[ \frac{W_{j}(\mathcal{F}_{k-1})(x)}{W(\mathcal{F}%
_{k-1})(x)}\right] \frac{d^{u}h_{k-1}(x)}{d^{u}x}=0,
\end{gather*}%
and changing the order of summation for $r$ and $u$ we finally obtain (\ref%
{HkHODE}), which is an $(m-1)$-order, linear HODE\ in $h_{k-1}.$ Note that a
sufficient condition for the solution $h_{k-1}$ to exist, is that the
coefficients (functions) of (\ref{HkHODE}) satisfy (\ref{CondhHODE}).
Unfortunately, as is the case with the reduction of method approach, a
closed form solution cannot be obtained immediately for $h_{k-1},$ $%
k=2,...,m,$ unless $m=2$ or we have specific conditions on the coefficients
of problem (\ref{LODE1}).

\bibliographystyle{plainnat}
\bibliography{WronskiProcess}

\end{document}